\title{A curious formula related to the Euler Gamma function}
\author{Bakir FARHI \\
Department of Mathematics \\
University of B\'ejaia \\
Algeria \\[1mm]
\href{mailto:bakir.farhi@gmail.com}{bakir.farhi@gmail.com}}
\date{}
\newtheorem{thm}{Theorem}
\newtheorem{coll}[thm]{Corollary}
\newtheorem{lemma}[thm]{Lemma}
\def\N{\mathbb{N}}
\def\Z{\mathbb{Z}}
\def\R{\mathbb{R}}
\def\C{\mathbb{C}}
\def\modd#1 #2{#1\ ({\rm mod}\ #2)}
\def\vabs#1{\left|#1\right|}
\begin{document}
\maketitle

\vspace*{-8cm}
\noindent{\large\bf To appear}
\vspace*{8cm}

\begin{abstract}
\noindent In this note, we prove that for all $x \in (0 , 1)$, we have:
$$
\log\Gamma(x) ~=~ \frac{1}{2} \log\pi + \pi \boldsymbol{\eta} \left(\frac{1}{2} - x\right) - \frac{1}{2} \log\sin(\pi x) + \frac{1}{\pi} \sum_{n = 1}^{\infty} \frac{\log n}{n} \sin(2 \pi n x) ,
$$
where $\Gamma$ denotes the Euler Gamma function and 
$$
\boldsymbol{\eta} ~:=~ 2 \int_{0}^{1} \log\Gamma(x) \cdot \sin(2 \pi x) \, d x ~=~ 0.7687478924\dots
$$
\end{abstract}
\textit{MSC 2010}: Primary 33B15, 42A16.  \\
\textit{Keywords}: The Euler Gamma function, Special functions, Fourier expansions.

\section{Introduction}
Throughout this note, we let $\boldsymbol{\eta} := 2 \int_{0}^{1} \log\Gamma(x) \cdot \sin(2 \pi x) \, d x$ and we let $\langle x\rangle$ denote the fractional part of a given real number $x$. We let also $B_k$ ($k \in \N$) denote the Bernouilli polynomials which are usually defined by means of the generating function:
$$
\frac{t e^{x t}}{e^t - 1} ~=~ \sum_{k = 0}^{\infty} B_k(x) \frac{t^k}{k!} .
$$
We just note that each polynomial $B_k$ is monic and has degree $k$ and that the two first Bernouilli polynomials are $B_0(x) = 1$ and $B_1(x) = x - \frac{1}{2}$.

The Euler Gamma function is one of the most important special functions in complex analysis. For complex numbers $z$ with positive real part, it can be defined by the convergent improper integral:
$$
\Gamma(z) := \int_{0}^{+ \infty} t^{z - 1} e^{-t} \, d t .
$$
From this, we easily deduce the important functional equation $\Gamma(z + 1) = z \Gamma(z)$ which we use to extend $\Gamma$ by analytic continuation to all complex numbers except the non-positive integers. The obtained function is meromorphic with simple poles at the non-positive integers; that is what we call ``the Gamma function'' in its generality. We also precise that the Gamma function is an interpolation of the sequence ${((n - 1)!)}_{n \geq 1}$; precisely, we have $\Gamma(n) = (n - 1)!$ for all positive integer $n$. Actually, the interpolation of the factorial sequence is the motivation of Euler that lead him to discover the Gamma function. More interestingly, the Bohr-Mollerup theorem states that the Gamma function is the unique function $f$ satisfying $f(1) = 1$ and $f(x + 1) = x f(x)$, which is log-convex on the positive real axis.

Several important formulas and properties are known for the Gamma function. We recommend the reader to consult the books \cite{and,art,god}. In this note, we just cite the formulas we use for proving our main result. The first one is the so-called ``Euler's reflection formula'', which is given (for all $z \in \C \setminus \Z$) by:
\begin{equation}\label{eq1}
\Gamma(z) \Gamma(1 - z) = \frac{\pi}{\sin(\pi z)}
\end{equation}
and the second one is ``the Gauss multiplication formula'' which is given by:
\begin{equation}\label{eq2}
\Gamma(z) \Gamma\left(z + \frac{1}{k}\right) \cdots \Gamma\left(z + \frac{k - 1}{k}\right) = (2 \pi)^{\frac{k - 1}{2}} k^{- k z + \frac{1}{2}} \Gamma(k z)
\end{equation}
and holds for any positive integer $k$ and any complex number $z$ for which the both sides of \eqref{eq2} are well-defined. The particular case $k = 2$ of \eqref{eq2} is the so-called ``Legendre's formula of duplication''.

The aim of this note is to obtain the Fourier expansion of the $1$-periodic function which coincides with the function $\log\Gamma$ on the interval $(0 , 1)$.
\subsection*{Some useful Fourier expansions}
The proof of our main result needs the Fourier expansion of the two $1$-periodic functions $x \mapsto \log\vabs{\sin(\pi x)}$ and $x \mapsto B_1(\langle x\rangle)$. These Fourier expansions are both known. \\
The Fourier expansions of the functions $x \mapsto B_k(\langle x\rangle)$ ($k \geq 1$) was discovered by Hurwitz in 1890 and are given by the identity:  
$$
B_k(\langle x\rangle) ~=~ - \frac{2 \cdot k!}{(2 \pi)^k} \sum_{n = 1}^{\infty} \frac{\cos\left(2 \pi n x - \frac{k \pi}{2}\right)}{n^k} ,
$$
which holds for all $x \in \R \setminus \Z$ if $k = 1$ and for all $x \in \R$ if $k \geq 2$. \\
Taking $k = 1$ in the last identity, we obtain in particular for all $x \in \R \setminus \Z$:
\begin{equation}\label{fexpb1}
B_1(\langle x\rangle) ~=~ \langle x\rangle - \frac{1}{2} ~=~ - \frac{1}{\pi} \sum_{n = 1}^{\infty} \frac{\sin(2 \pi n x)}{n}  
\end{equation}
On the other hand, the Fourier expansion of the $1$-periodic function $x \mapsto \log\vabs{\sin(\pi x)}$ is well-known and given (for all $x \in \R \setminus \Z$) by:
\begin{equation}\label{fexplogsin}
\log\vabs{\sin(\pi x)} ~=~ - \log 2 - \sum_{n = 1}^{\infty} \frac{\cos(2 \pi n x)}{n}
\end{equation}
For all these Fourier expansions and tigonometric series in general, we recommend the reader to consult the book of A. Zygmund \cite{zyg}. 
\section{The results}
Our main result is the following:
\begin{thm}\label{t1}
For all $x \in (0 , 1)$, we have:
$$
\log\Gamma(x) ~=~ \frac{1}{2} \log\pi + \pi \boldsymbol{\eta} \left(\frac{1}{2} - x\right) - \frac{1}{2} \log\sin(\pi x) + \frac{1}{\pi} \sum_{n = 1}^{\infty} \frac{\log n}{n} \sin(2 \pi n x) ,
$$
where $\boldsymbol{\eta} := 2 \int_{0}^{1} \log\Gamma(x) \cdot \sin(2 \pi x) \, d x = 0.7687478924\dots$.
\end{thm}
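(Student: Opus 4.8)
The plan is to compute directly the Fourier series of the $1$-periodic function agreeing with $\log\Gamma$ on $(0,1)$, and then recognize the closed form through the two expansions \eqref{fexpb1} and \eqref{fexplogsin}. Since $\log\Gamma(x) = O(-\log x)$ as $x \to 0^+$ and is continuous on $(0,1]$, the function is integrable on $(0,1)$, so the coefficients
$$
a_0 := \int_0^1 \log\Gamma(x)\,dx, \quad a_n := 2\int_0^1 \log\Gamma(x)\cos(2\pi n x)\,dx, \quad b_n := 2\int_0^1 \log\Gamma(x)\sin(2\pi n x)\,dx
$$
are well defined. On every compact subinterval of $(0,1)$ the function $\log\Gamma$ is smooth, hence of bounded variation, so by the Dirichlet--Jordan criterion (the endpoint singularity being irrelevant to interior convergence by Riemann localization) the Fourier series $a_0 + \sum_{n\ge 1}\bigl(a_n\cos(2\pi n x) + b_n\sin(2\pi n x)\bigr)$ converges to $\log\Gamma(x)$ at each $x \in (0,1)$. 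It then remains to evaluate the three families of coefficients.

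For the constant term and the cosine coefficients I would exploit the reflection formula \eqref{eq1}. Writing $\log\Gamma(x) + \log\Gamma(1-x) = \log\pi - \log\sin(\pi x)$ and symmetrizing the integrals under $x \mapsto 1-x$ (which fixes $\cos(2\pi n x)$), one gets $2 a_0 = \int_0^1 (\log\pi - \log\sin(\pi x))\,dx$ and $2 a_n = 2\int_0^1 (\log\pi - \log\sin(\pi x))\cos(2\pi n x)\,dx$. Both reduce to term-by-term integration against the expansion \eqref{fexplogsin}: orthogonality kills all but one term and yields $a_0 = \tfrac12\log(2\pi)$ and $a_n = \tfrac{1}{2n}$.

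The crux of the proof is the computation of the sine coefficients, and this is where the multiplication formula \eqref{eq2} enters. The substitution $u = n x$ followed by splitting $\int_0^n$ into the unit intervals $[j,j+1]$ and using the $1$-periodicity of $\sin(2\pi u)$ turns $\tfrac12 b_n$ into $\tfrac1n \int_0^1\bigl(\sum_{j=0}^{n-1}\log\Gamma(\tfrac{v+j}{n})\bigr)\sin(2\pi v)\,dv$. Applying \eqref{eq2} with $k = n$ and $z = v/n$ collapses the inner sum to $\tfrac{n-1}{2}\log(2\pi) + (\tfrac12 - v)\log n + \log\Gamma(v)$. Integrating against $\sin(2\pi v)$, the constant term vanishes, the linear term contributes $\tfrac{\log n}{2\pi}$ (since $\int_0^1(\tfrac12 - v)\sin(2\pi v)\,dv = \tfrac{1}{2\pi}$), and the last term reproduces $\tfrac12\boldsymbol{\eta}$ by the very definition of $\boldsymbol{\eta}$. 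Hence $b_n = \tfrac{\boldsymbol{\eta}}{n} + \tfrac{\log n}{\pi n}$; the self-referential appearance of $\boldsymbol{\eta}$ as the $n=1$ coefficient is exactly what makes the statement clean. The main obstacle is recognizing that this dilation-and-split device is the right way to bring \eqref{eq2} to bear on a single Fourier coefficient.

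Finally I would assemble the series and read off the closed form. Substituting the coefficients gives $\log\Gamma(x) = \tfrac12\log(2\pi) + \sum_{n\ge1}\tfrac{1}{2n}\cos(2\pi n x) + \sum_{n\ge1}(\tfrac{\boldsymbol{\eta}}{n} + \tfrac{\log n}{\pi n})\sin(2\pi n x)$. Resumming the cosine series by \eqref{fexplogsin} produces $-\tfrac12\log\sin(\pi x) - \tfrac12\log 2$, and resumming $\boldsymbol{\eta}\sum_{n\ge1}\tfrac{\sin(2\pi n x)}{n}$ by \eqref{fexpb1} produces $\pi\boldsymbol{\eta}(\tfrac12 - x)$; combining $\tfrac12\log(2\pi) - \tfrac12\log 2 = \tfrac12\log\pi$ and leaving the remaining $\tfrac1\pi\sum \tfrac{\log n}{n}\sin(2\pi n x)$ untouched yields precisely the asserted identity.
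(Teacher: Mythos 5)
Your proposal is correct, and on the decisive step --- the sine coefficients --- it takes a genuinely different route from the paper. The paper never computes $b_n$ by direct integration: it forms the $\frac{1}{k}$-periodic function $\varphi(x) + \varphi(x + \frac{1}{k}) + \dots + \varphi(x + \frac{k-1}{k})$, expands it twice --- once from the Fourier series \eqref{eq3} via the discrete orthogonality relations of Lemma \ref{l1}, once from the Gauss formula \eqref{eq2} combined with \eqref{fexpb1} --- and extracts, by uniqueness of Fourier coefficients, the distribution relation $k\, b_{kn} = b_n + \frac{\log k}{\pi n}$, which at $n = 1$ gives $b_k = \frac{\boldsymbol{\eta}}{k} + \frac{\log k}{\pi k}$. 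Your dilation-and-split device (substitute $u = nx$, cut $[0,n]$ into unit intervals, use the $1$-periodicity of $\sin(2\pi u)$) instead applies \eqref{eq2} with $k = n$, $z = v/n$ \emph{inside} a single coefficient integral and evaluates each $b_n$ in one stroke; your elementary integrals $\int_0^1 \sin(2\pi v)\,dv = 0$ and $\int_0^1 (\frac{1}{2} - v)\sin(2\pi v)\,dv = \frac{1}{2\pi}$ are correct, and the interchange of the (finite) sum with the integral is licit since the endpoint singularity $\log\Gamma(v) \sim -\log v$ is absolutely integrable. What your route buys is economy: Lemma \ref{l1}, the uniqueness-of-Fourier-expansion argument, and the paper's periodicity steps extending identities from $(0, \frac{1}{k})$ to all of $\R \setminus \frac{1}{k}\Z$ all become unnecessary, and you get every $b_n$ directly rather than through the $n = 1$ specialization of a recursion. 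What the paper's route buys is the coefficient identity $k\, b_{kn} = b_n + \frac{\log k}{\pi n}$ itself, a Kubert-type distribution relation of independent interest, together with the companion relations $k\, a_{kn} = a_n$ serving as a consistency check on \eqref{coeff-a_n}. On the cosine side the two arguments are essentially the same reflection-formula computation, except that where the paper invokes uniqueness you integrate \eqref{fexplogsin} term by term against $\cos(2\pi n x)$; that step merits one sentence of justification (the series has square-summable coefficients, hence converges in $L^2(0,1)$, which legitimizes the termwise integration), a level of care comparable to the paper's own appeal to uniqueness.
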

The proof of Theorem \ref{t1} requires the following lemma:
\begin{lemma}\label{l1}
For all positive integers $n$ and $k$ and all real number $x$, we have:
$$
\sum_{\ell = 0}^{k - 1} \cos\left\{2 \pi n \left(x + \frac{\ell}{k}\right)\right\} ~=~ \begin{cases}
0 & \text{if}~~ n \not\equiv \modd{0} {k} \\
k \cos(2 \pi n x) & \text{if}~~ n \equiv \modd{0} {k}
\end{cases}
$$
and 
$$
\sum_{\ell = 0}^{k - 1} \sin\left\{2 \pi n \left(x + \frac{\ell}{k}\right)\right\} ~=~ \begin{cases}
0 & \text{if}~~ n \not\equiv \modd{0} {k} \\
k \sin(2 \pi n x) & \text{if}~~ n \equiv \modd{0} {k}
\end{cases} .
$$
\end{lemma}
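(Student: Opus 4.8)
The plan is to prove both identities simultaneously by packaging the cosine and sine sums into a single sum of complex exponentials, exploiting the identity $\cos\theta + i\sin\theta = e^{i\theta}$. Accordingly, I would introduce
$$
S ~:=~ \sum_{\ell = 0}^{k - 1} e^{2 \pi i n \left(x + \frac{\ell}{k}\right)} ,
$$
so that the cosine sum in the statement is exactly $\Re(S)$ and the sine sum is $\mathrm{Im}(S)$. Establishing the claimed value of $S$ therefore yields both displayed formulas at once, simply by reading off real and imaginary parts at the very end.

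First I would factor out the term that does not depend on $\ell$, writing $S = e^{2 \pi i n x} \sum_{\ell = 0}^{k - 1} \zeta^{\ell}$, where $\zeta := e^{2 \pi i n / k}$. The inner sum is then a plain geometric progression of length $k$ with common ratio $\zeta$, so the entire problem reduces to evaluating $\sum_{\ell = 0}^{k - 1} \zeta^{\ell}$ according to whether or not $\zeta = 1$.

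The key step is the case distinction on the congruence class of $n$ modulo $k$. I would note that $\zeta = 1$ precisely when $n / k \in \Z$, that is, when $n \equiv \modd{0} {k}$; in this case every summand equals $1$, so the inner sum is $k$ and $S = k\, e^{2 \pi i n x}$. In the complementary case $n \not\equiv \modd{0} {k}$ we have $\zeta \neq 1$, so the geometric sum collapses to $\frac{\zeta^{k} - 1}{\zeta - 1}$; but $\zeta^{k} = e^{2 \pi i n} = 1$ since $n$ is an integer, whence the numerator vanishes and $S = 0$.

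Separating real and imaginary parts of these two values of $S$ then reproduces exactly the two formulas in the statement. The main obstacle here is bookkeeping rather than substance: one must carefully record that $\zeta^{k} = 1$ holds for \emph{every} integer $n$ (so that the ratio formula $\frac{\zeta^k - 1}{\zeta - 1}$ is always legitimate whenever $\zeta \neq 1$), and that $\zeta = 1$ is equivalent to the divisibility $k \mid n$ and not to some weaker condition. No analytic input is needed; once the complexification is in place, the whole argument is a one-line geometric-series computation.
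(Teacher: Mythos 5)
Your proposal is correct and follows essentially the same route as the paper: both write the cosine and sine sums as the real and imaginary parts of $\sum_{\ell = 0}^{k-1} e^{2\pi i n (x + \ell/k)}$, factor out $e^{2\pi i n x}$, evaluate the resulting geometric progression by the case distinction $k \mid n$ versus $k \nmid n$ (using $\zeta^k = e^{2\pi i n} = 1$ to kill the numerator in the latter case), and conclude by separating real and imaginary parts. No meaningful difference in method or rigor.
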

\begin{proof}
Let $n$, $k$ be positive integers and $x$ be a real number. Define
$$
A ~:=~ \sum_{\ell = 0}^{k - 1} \cos\left\{2 \pi n \left(x + \frac{\ell}{k}\right)\right\} ~~\text{and}~~ B ~:=~ \sum_{\ell = 0}^{k - 1} \sin\left\{2 \pi n \left(x + \frac{\ell}{k}\right)\right\} . 
$$
Then, we have:
\begin{eqnarray*}
A + i B ~=~ \sum_{\ell = 0}^{k - 1} e^{2 \pi n \left(x + \frac{\ell}{k}\right) i} & = & e^{2 \pi n i x} \sum_{\ell = 0}^{k - 1} \left(e^{\frac{2 \pi n}{k} i}\right)^{\ell} \\
& = & e^{2 \pi n i x} \times \begin{cases}
\frac{1 - \left(e^{\frac{2 \pi n}{k} i}\right)^k}{1 - e^{\frac{2 \pi n}{k} i}} & \text{if}~~ n \not\equiv \modd{0} {k} \\
k & \text{if}~~ n \equiv \modd{0} {k}
\end{cases} .
\end{eqnarray*}
Thus:
$$
A + i B ~=~ \begin{cases}
0 & \text{if}~~ n \not\equiv \modd{0} {k} \\
k e^{2 \pi n i x} & \text{if}~~ n \equiv \modd{0} {k} 
\end{cases} .
$$
The identities of the lemma follow by identifying the real and the imaginary parts of the two sides of the last identity. The lemma is proved.
\end{proof}
\subsection*{Proof of Theorem \ref{t1}:}
For the following, we let $\varphi$ denote the $1$-periodic function which coincides with the function $\log\Gamma$ on the interval $(0,1)$. We let also
$$
a_0 + \sum_{n = 1}^{\infty} a_n \cos(2 \pi n x) + \sum_{n = 1}^{\infty} b_n \sin(2 \pi n x) 
$$
denote the Fourier series associated to $\varphi$ (where $a_n , b_n \in \R$). So, we have\\ $a_0 = \int_{0}^{1} \log\Gamma(x) \, d x$ and for all positive integer $n$:
$$
a_n ~=~ 2 \int_{0}^{1} \log\Gamma(x) \cdot \cos(2 \pi n x) \, d x  ~~~~\text{and}~~~~
b_n ~=~ 2 \int_{0}^{1} \log\Gamma(x) \cdot \sin(2 \pi n x) \, d x .
$$
Remark that all these improper integrals converges because the function $\log\Gamma$ is continuous on $(0,1]$ and, at the neighborhood of $0$, we have that $\log\Gamma(x) = \log\Gamma(x + 1) - \log x \sim_{0} - \log x$ and that $\int_{0}^{1} \log x \, d x$ converges. In addition, because $\varphi$ is of class $\mathscr{C}^{1}$ on all the intervals constituting $\R \setminus \Z$, we have according to the classical Dirichlet theorem:
\begin{equation}\label{eq3}
\varphi(x) ~=~ a_0 + \sum_{n = 1}^{\infty} a_n \cos(2 \pi n x) + \sum_{n = 1}^{\infty} b_n \sin(2 \pi n x) 
\end{equation}
(for all $x \in \R \setminus \Z$). Now, we are going to calculate the $a_n$'s and the $b_n$'s. \\[1mm]
\textbf{Calculation of the $a_n$'s:}\\[1mm]
To do this, we lean on the Euler formula \eqref{eq1} and on the Fourier expansion of the function $x \mapsto \log|\sin(\pi x)|$ given by \eqref{fexplogsin}. Using \eqref{eq3}, we have for all $x \in \R \setminus \Z$:
\begin{equation}\label{eq4}
\varphi(x) + \varphi(1 - x) ~=~ 2 a_0 + \sum_{n = 1}^{\infty} 2 a_n \cos(2 \pi n x)
\end{equation}
But, on the other hand, according to \eqref{eq1} and \eqref{fexplogsin}, we have for all $x \in (0 , 1)$:
$$
\varphi(x) + \varphi(1 - x) ~=~ \log\Gamma(x) + \log\Gamma(1 - x) ~=~ \log\pi - \log\sin(\pi x) ~=~ \log\pi - \log\left|\sin(\pi x)\right| .
$$
But since the two functions $x \mapsto \varphi(x) + \varphi(1 - x)$ and $x \mapsto \log\pi - \log\left|\sin(\pi x)\right|$, which are defined on $\R \setminus \Z$, are both $1$-periodic, we generally have for all $x \in \R \setminus \Z$:
$$
\varphi(x) + \varphi(1 - x) ~=~ \log\pi - \log\left|\sin(\pi x)\right| .
$$
It follows, according to \eqref{fexplogsin}, that for all $x \in \R \setminus \Z$, we have:
\begin{equation}\label{eq5}
\varphi(x) + \varphi(1 - x) ~=~ \log(2 \pi) + \sum_{n = 1}^{\infty}\frac{\cos(2 \pi n x)}{n}
\end{equation}
The comparison between \eqref{eq4} and \eqref{eq5} gives (according to the uniqueness of the Fourier expansion):
\begin{equation}\label{coeff-a_n}
a_0 ~=~ \frac{1}{2} \log(2 \pi) ~~~~\text{and}~~~~ a_n ~=~ \frac{1}{2 n} ~~~~ (\forall n \geq 1)
\end{equation} 
\textbf{Calculation of the $b_n$'s:}\\[1mm]
To do this, we lean on the one hand on Lemma \ref{l1} and on the other hand on the Gauss formula \eqref{eq2} and on the Fourier expansion of the function $x \mapsto B_1(\langle x\rangle)$. \\
Let $k$ be a positive integer and let $\frac{1}{k} \Z$ denote the set $\{\frac{n}{k} : n \in \Z\}$. From \eqref{eq3}, we have for all $x \in \R \setminus \frac{1}{k} \Z$:
\begin{multline*}
\varphi(x) + \varphi\left(x + \frac{1}{k}\right) + \dots + \varphi\left(x + \frac{k - 1}{k}\right) \\
=~ k a_0 + \sum_{n = 1}^{\infty} a_n \left(\sum_{\ell = 0}^{k - 1} \cos\left\{2 \pi n \left(x + \frac{\ell}{k}\right)\right\}\right) + \sum_{n = 1}^{\infty} b_n \left(\sum_{\ell = 0}^{k - 1} \sin\left\{2 \pi n \left(x + \frac{\ell}{k}\right)\right\}\right) .
\end{multline*}
It follows from Lemma \ref{l1} that for all $x \in \R \setminus \frac{1}{k} \Z$:
\begin{multline*}
\varphi(x) + \varphi\left(x + \frac{1}{k}\right) + \dots + \varphi\left(x + \frac{k - 1}{k}\right) \\
=~ k a_0 + \sum_{\begin{subarray}{c} n \geq 1 \\ n \equiv \modd{0} {k}\end{subarray}} k a_n \cos(2 \pi n x) + \sum_{\begin{subarray}{c} n \geq 1 \\ n \equiv \modd{0} {k}\end{subarray}} k b_n \sin(2 \pi n x) ,
\end{multline*}
which amounts to:
\begin{multline}\label{eq6}
\varphi(x) + \varphi\left(x + \frac{1}{k}\right) + \dots + \varphi\left(x + \frac{k - 1}{k}\right) \\
=~ k a_0 + \sum_{n = 1}^{\infty} k a_{k n} \cos(2 \pi k n x) + \sum_{n = 1}^{\infty} k b_{k n} \sin(2 \pi k n x)
\end{multline}
($\forall x \in \R \setminus \frac{1}{k} \Z$). \\
But on the other hand, according to \eqref{eq2}, we have for all $x \in (0 , \frac{1}{k})$:
\begin{multline*}
\varphi(x) + \varphi\left(x + \frac{1}{k}\right) + \dots + \varphi\left(x + \frac{k - 1}{k}\right) \\
=~ \log\Gamma(x) + \log\Gamma\left(x + \frac{1}{k}\right) + \dots + \log\Gamma\left(x + \frac{k - 1}{k}\right) \\
=~ \frac{k - 1}{2} \log(2 \pi) + \left(\frac{1}{2} - k x\right) \log k + \log\Gamma(k x) \\
=~ \frac{k - 1}{2} \log(2 \pi) - (\log k) B_1(\langle k x\rangle) + \varphi(k x) . 
\end{multline*}
But since the two functions $x \mapsto \varphi(x) + \varphi(x + \frac{1}{k}) + \dots + \varphi(x + \frac{k - 1}{k})$ and $x \mapsto \frac{k - 1}{2} \log(2 \pi)$ $- (\log{k}) B_1(\langle k x\rangle) + \varphi(k x)$ (which are defined on $\R \setminus \frac{1}{k} \Z$) are both $\frac{1}{k}$-periodic, we have more generally for all $x \in \R \setminus \frac{1}{k} \Z$:
$$
\varphi(x) + \varphi\left(x + \frac{1}{k}\right) + \dots + \varphi\left(x + \frac{k - 1}{k}\right) ~=~ \frac{k - 1}{2} \log(2 \pi) - (\log{k}) B_1\left(\langle k x\rangle\right) + \varphi(k x) .
$$
Using the Fourier expansions \eqref{fexpb1} and \eqref{eq3}, it follows that for all $x \in \R \setminus \frac{1}{k} \Z$:
\begin{multline}\label{eq7}
\varphi(x) + \varphi\left(x + \frac{1}{k}\right) + \dots + \varphi\left(x + \frac{k - 1}{k}\right) \\
=~ \frac{k - 1}{2} \log(2 \pi) + a_0 + \sum_{n = 1}^{\infty} a_n \cos(2 \pi k n x) + \sum_{n = 1}^{\infty} \left(b_n + \frac{\log k}{\pi n}\right) \sin(2 \pi k n x)
\end{multline}
Note that the right hand sides of \eqref{eq6} and \eqref{eq7} both represent the Fourier expansion of the $\frac{1}{k}$-periodic function $x \mapsto \varphi(x) + \varphi(x + \frac{1}{k}) + \dots + \varphi(x + \frac{k - 1}{k})$. So, according to the uniqueness of the Fourier expansion, we have:
\begin{equation}\label{eq8}
k a_0 ~=~ \frac{k - 1}{2} \log(2 \pi) + a_0 ~~,~~ k a_{k n} ~=~ a_n ~~~~~~ (\forall n \geq 1)
\end{equation}
and
\begin{equation}\label{eq9}
k b_{k n} ~=~ b_n + \frac{\log k}{\pi n} ~~~~~~~~~~ (\forall n \geq 1)
\end{equation}
The relations \eqref{eq8} immediately follow from \eqref{coeff-a_n}, so they don't give us any new information. Actually, the information about the $b_n$'s follows from \eqref{eq9}. Indeed, by taking $n = 1$ in \eqref{eq9}, we get:
$$
b_k ~=~ \frac{\log k}{\pi k} + \frac{b_1}{k} ~=~ \frac{\log k}{\pi k} + \frac{\boldsymbol{\eta}}{k} .
$$
Since this holds for any positive integer $k$, then we have:
\begin{equation}\label{coeff-b_n}
b_n ~=~ \frac{\log n}{\pi n} + \frac{\boldsymbol{\eta}}{n} ~~~~~~~~ (\forall n \geq 1)
\end{equation}
\textbf{Conclusion:}\\[1mm]
By replacing in \eqref{eq3} the values of the $a_n$'s and $b_n$'s, which are previously obtained in \eqref{coeff-a_n} and \eqref{coeff-b_n}, we get for all $x \in \R \setminus \Z$:
\begin{equation}\label{four_log_gamma}
\varphi(x) ~=~ \frac{1}{2} \log(2 \pi) + \sum_{n = 1}^{\infty} \frac{\cos(2 \pi n x)}{2 n} + \sum_{n = 1}^{\infty}\left(\frac{\log n}{\pi} + \boldsymbol{\eta}\right) \frac{\sin(2 \pi n x)}{n}
\end{equation} 
Finally, taking $x \in (0 , 1)$, we have $\varphi(x) = \log\Gamma(x)$, $\sum_{n = 1}^{\infty} \frac{\cos(2 \pi n x)}{n} = - \log 2$ $- \log\sin(\pi x)$ (according to \eqref{fexplogsin}) and $\sum_{n = 1}^{\infty} \frac{\sin(2 \pi n x)}{n} = \pi \left(\frac{1}{2} - x\right)$ (according to \eqref{fexpb1}); so it follows from \eqref{four_log_gamma} that for all $x \in (0 , 1)$, we have:
$$
\log\Gamma(x) ~=~ \frac{1}{2} \log\pi + \pi \boldsymbol{\eta}\left(\frac{1}{2} - x\right) - \frac{1}{2} \log\sin(\pi x) + \frac{1}{\pi} \sum_{n = 1}^{\infty} \frac{\log n}{n} \sin(2 \pi n x) ,
$$
as required. The theorem is proved. \hfill$\blacksquare$

\medskip

As an immediate consequence of Theorem \ref{t1}, we have the following curious corollary which gives the sum of the convergent alternating series $\sum_{n = 0}^{\infty} (-1)^n \frac{\log(2 n + 1)}{2 n + 1}$.
\begin{coll}
We have:
$$
\frac{\log 1}{1} - \frac{\log 3}{3} + \frac{\log 5}{5} - \dots ~=~ \pi \log\Gamma(1/4) - \frac{\pi^2}{4} \boldsymbol{\eta} - \frac{\pi}{2}\log\pi - \frac{\pi}{4}\log 2 .
$$
\end{coll}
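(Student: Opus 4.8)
The plan is to specialize Theorem~\ref{t1} to the single point $x = \frac{1}{4}$ and then solve the resulting identity for the target series. The motivation is transparent: the factor $\sin(2\pi n x)$ in the series $\frac{1}{\pi}\sum_{n\geq 1}\frac{\log n}{n}\sin(2\pi n x)$ becomes $\sin\!\left(\frac{\pi n}{2}\right)$ at $x = \frac14$, and this quantity vanishes for every even $n$ while selecting exactly the odd-indexed terms with an alternating sign, which is precisely the structure of the series to be summed.

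First I would write down $\sin\!\left(\frac{\pi n}{2}\right)$ explicitly: it equals $0$ whenever $n$ is even, and for $n = 2m+1$ odd it equals $\sin\!\left(\pi m + \frac{\pi}{2}\right) = (-1)^m$. Substituting $n = 2m+1$ therefore collapses the series into
$$
\frac{1}{\pi}\sum_{n=1}^{\infty}\frac{\log n}{n}\sin\!\left(\frac{\pi n}{2}\right) ~=~ \frac{1}{\pi}\sum_{m=0}^{\infty}(-1)^m\,\frac{\log(2m+1)}{2m+1} ~=~ \frac{1}{\pi}\left(\frac{\log 1}{1} - \frac{\log 3}{3} + \frac{\log 5}{5} - \cdots\right),
$$
which is exactly $\frac{1}{\pi}$ times the series whose value the corollary asserts. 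Its convergence is already guaranteed by the validity of Theorem~\ref{t1} at $x=\frac14 \in (0,1)$, but it can also be seen directly: since $t \mapsto \frac{\log t}{t}$ is eventually decreasing to $0$, the alternating series test applies.

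Next I would evaluate the three remaining terms of Theorem~\ref{t1} at $x = \frac14$. The linear term gives $\pi\boldsymbol{\eta}\!\left(\frac12 - \frac14\right) = \frac{\pi\boldsymbol{\eta}}{4}$; the term $\frac12\log\pi$ is unchanged; and since $\sin\!\left(\frac{\pi}{4}\right) = \frac{1}{\sqrt{2}}$ one has $-\frac12\log\sin\!\left(\frac{\pi}{4}\right) = -\frac12\left(-\frac12\log 2\right) = \frac14\log 2$. Collecting everything, Theorem~\ref{t1} at $x=\frac14$ reads
$$
\log\Gamma\!\left(\tfrac14\right) ~=~ \frac12\log\pi + \frac{\pi\boldsymbol{\eta}}{4} + \frac14\log 2 + \frac{1}{\pi}S ,
$$
where $S$ denotes the alternating series in question.

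Finally I would simply solve this linear relation for $S$, multiplying through by $\pi$ and rearranging, to obtain
$$
S ~=~ \pi\log\Gamma\!\left(\tfrac14\right) - \frac{\pi^2}{4}\boldsymbol{\eta} - \frac{\pi}{2}\log\pi - \frac{\pi}{4}\log 2 ,
$$
which is the claimed formula. There is no genuine obstacle here, as befits an immediate consequence; the only points requiring a moment's care are the index bookkeeping that produces the sign $(-1)^m$ from $\sin\!\left(\frac{\pi n}{2}\right)$ and the easy-to-misplace simplification of $\log\sin\!\left(\frac{\pi}{4}\right)$.
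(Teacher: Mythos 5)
Your proposal is correct and is exactly the paper's proof, which simply substitutes $x = \frac{1}{4}$ into Theorem \ref{t1}; your explicit verification that $\sin\left(\frac{\pi n}{2}\right)$ vanishes for even $n$ and equals $(-1)^m$ for $n = 2m+1$, together with $-\frac{1}{2}\log\sin\left(\frac{\pi}{4}\right) = \frac{1}{4}\log 2$, fills in the routine details the paper leaves to the reader. Nothing is missing and the rearrangement to the stated identity is exact.
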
 
\begin{proof}
It suffices to take $x = \frac{1}{4}$ in the formula of Theorem \ref{t1}.
\end{proof}
We finish this note with the following open question:\\[1mm]
\textbf{Open question:} Is it possible to express the constant $\boldsymbol{\eta}$ in terms of the known mathematical constants as $\pi , \log\pi , \log{2} , \gamma , \Gamma(1/4) , e , \dots$?

\end{document}